\theoremstyle{plain}
    \newtheorem{theorem}{Theorem}[section]
    \newtheorem{lemma}[theorem]{Lemma}
\theoremstyle{definition}
    \newtheorem{remark}[theorem]{Remark}
    \newtheorem{problem}[theorem]{Problem}
\long\def\comment#1\endcomment{}
\newcommand\norm[1]{\ensuremath{\left\lVert#1\right\rVert}}
\newcommand\abs[1]{\ensuremath{\left\lvert#1\right\rvert}}
\newcommand{\R}{\ensuremath{\mathbb{R}}}
\newcommand{\Z}{\ensuremath{\mathbb{Z}}}
\newcommand{\N}{\ensuremath{\mathbb{N}}}
\newcounter{mcnt}
\renewcommand{\themcnt}{(\alph{mcnt})}
\newcommand{\cnt}{\addtocounter{mcnt}{1}\themcnt\ \nopagebreak}
\newcommand{\juststep}{\addtocounter{mcnt}{1}\themcnt}
\newcommand{\stabilizemcnt}{\renewcommand{\themcnt}{(\alph{mcnt})}}
\newcounter{wordcnt}
\newcommand{\setenumi}{
    \topsep=0mm
    \partopsep=0mm
    \parsep=0mm
    \itemsep=0mm
    \leftmargin=0pt
    \listparindent=7mm
    \itemindent=3mm
    \labelsep=2mm
    \labelwidth=-5mm
    \usecounter{enumi}
    }
\newenvironment{remarkenumi}
    {\begin{list}{\labelenumi}{\setenumi}}
    {\end{list}}
\renewcommand{\labelenumi}{(\alph{enumi})}
\theoremstyle{plain}
    \newtheorem{stern}[theorem]{Sternfeld's Arrays Theorem}
\theoremstyle{definition}
    \newtheorem{arno}[theorem]{Arnold's Problem}
\newcommand\tonset[1]{\ensuremath{\{1,\ldots,#1\}}}
\newcommand\fromto[2]{\ensuremath{#1_1,\ldots,#1_{#2}}}
\newcommand{\jonly}[1]{}
\newcommand{\aronly}[1]{#1}
\begin{document}

\title{A simpler proof of Sternfeld's Theorem}
\author{S. Dzhenzher}
\date{}

\thanks{Moscow Institute of Physics and Technology. 
\newline
sdjenjer@yandex.ru
\newline
I would like to thank Arkadiy Skopenkov and Vitaliy Kurlin for many useful suggestions.}

\maketitle

\begin{abstract}
    In Sternfeld's work on Kolmogorov's Superposition Theorem appeared the com\-bi\-natorial-geo\-metric notion of a basic set and a certain kind of arrays.
    A~subset $X \subset \R^n$ is \emph{basic} if any continuous function $X\to\R$ could be represented as the sum of compositions of continuous functions $\R\to\R$ and projections to the coordinate axes.
    The~definition of a~Sternfeld array is presented in this paper.
    
    \textbf{Sternfeld's Arrays Theorem.}
    \emph{If a closed bounded subset $X \subset \R^{2n}$ contains Sternfeld arrays of arbitrary large size then $X$ is not basic}.

    The paper provides a simpler proof of this theorem.
\end{abstract}

\section{Introduction}

Hilbert's 13th problem is as follows: \emph{can the equation $x^7 + ax^3 + bx^2 + cx + 1 = 0$ of degree seven be solved without using functions of three variables?} The problem is solved by Kolmogorov's Superposition Theorem (Remark~\ref{rem:motivation}.\ref{en:motivation:th-kolm-super}; for details see \cite{Ar, Ko, Sk}).
In the future works related to this theorem Sternfeld's Arrays Theorem~\ref{th:basicSubset} appeared to be useful (for details and motivation see
Remark~\ref{rem:motivation}.\ref{en:motivation:th-kolm-super},\ref{en:motivation:nondecr},\ref{en:motivation:th-stern-dim},\ref{en:motivation:st-dim-proof}).
The main result of this text is~a~simpler proof of Sternfeld's Arrays Theorem~\ref{th:basicSubset}.
This theorem is a necessary condition for a set being basic (and is interesting for applications in the case $n=2$).

A subset $X \subset \R^n$ is \emph{\textbf{basic}} if for any continuous function $f\colon X\to\R$ there are continuous functions $\varphi_1, \ldots, \varphi_n\colon\R\to\R$ such that $f(x_1, \ldots, x_n) = \varphi_1(x_1) + \ldots + \varphi_n(x_n)$ for any $(x_1, \ldots, x_n) \in X$.

Remarks are not used in formulations and in proof of the main result (and can be omitted).

\begin{remark}[Motivation]
\label{rem:motivation}
\hspace{0pt}
\begin{remarkenumi}
    \item \label{en:motivation:th-kolm-super}
    \textbf{Kolmogorov's Theorem.}
    \emph{For any integer $n>1$ there are numbers $\alpha_1,\ldots,\alpha_n$ 
        and continuous functions $g_1,\ldots,g_{2n+1}\colon[\,0,1\,]\to[\,0,1\,]$ such that for any continuous function $f\colon[\,0,1\,]^n\to\R$ 
        there is a continuous function $h\colon\R\to\R$ 
        such that for any $x_1, \ldots, x_n \in [\,0,1\,]$
        \[
            f(x_1, \ldots, x_n) = \sum_{k=1}^{2n+1} h\left(\sum_{i=1}^n \alpha_i g_k(x_i)\right).
        \]
    }
    
    \item \label{en:motivation:kolm-impl-bas}
    \emph{Corollary.}
    There is an embedding $[\,0,1\,]^n \to \R^{2n+1}$ whose image is basic.
    
    \emph{Proof.}
    The required embedding $\psi = (\psi_1, \ldots, \psi_{2n+1})\colon [\,0,1\,]^{n}\to\R^{2n+1}$ is defined by
    \[
        \psi_k(x_1, \ldots, x_n) := \sum_{i=1}^n \alpha_i g_k(x_i)\quad\text{for each $x_1, \ldots, x_n \in [\,0,1\,]$},\qquad k=1,\ldots,2n+1.
    \]
    Let us show that $\psi\bigl([\,0,1\,]^n\bigr) \subset \R^{2n+1}$ is basic.
    Suppose we have a continuous function $\widehat f\colon \psi\bigl([\,0,1\,]^n\bigr)\to\R$.
    Define the function $f\colon [\,0,1\,]^n\to\R$ by $f(x) := \widehat f\bigl(\psi (x)\bigr)$.
    Take~the~function $h\colon\R\to\R$ from Kolmogorov's Theorem, applied to~$f$.
    Take $\varphi_1 = \ldots = \varphi_{2n+1} = h$ in~the~definition of~a~basic set.
    For each $y = (y_1, \ldots, y_{2n+1}) \in \psi\bigl([\,0,1\,]^n\bigr)$ the preimage $\psi^{-1}(y)$ is~well defined, so
    \[
        \hat f(y) = f\!\left(\psi^{-1}(y)\right) =
        \sum_{k=1}^{2n+1} h \bigl( \psi_k \bigl( \psi^{-1}(y)\bigr)\bigr) =
        \sum_{k=1}^{2n+1} \varphi_k (y_k).
    \]
    Then the set $\psi\bigl([\,0,1\,]^n\bigr)$ is basic.

    \item \label{en:motivation:nondecr}
    The number $2n+1$ in Kolmogorov's Theorem \eqref{en:motivation:th-kolm-super} cannot be decreased.

    \emph{Proof.}
    On the contrary, suppose that this number can be decreased down to $2n$.
    Then analogously to corollary~\eqref{en:motivation:kolm-impl-bas} there is an embedding $[\,0,1\,]^n \to \R^{2n}$ whose image is basic.
    This contradicts Sternfeld's Dimension Theorem~\eqref{en:motivation:th-stern-dim}.
    
    \item \label{en:motivation:th-stern-dim}
    \textbf{Sternfeld's Dimension Theorem.} \cite[Theorem~5]{St85}
    \emph{No subset of $\R^{2n}$ homeomorphic to $[\,0,1\,]^n$ is basic for $n > 1$.}
    
    On generalizations of \eqref{en:motivation:th-kolm-super}, \eqref{en:motivation:th-stern-dim} see
    \aronly{Remark~\ref{rem:generalSternfeld}}\jonly{\cite[Remark~3.2]{Dz22}}.
    
    \item
    The analogue of~\eqref{en:motivation:th-stern-dim} for $n=1$ is false.
    E.g. the segment $0 \times [\,0,1\,]$ is basic, cf.~\cite[Example~2.26 and \S6]{St89}.
    For the~characterization of basic subsets of the plane see Arnold's Problem~\ref{prob:Arnold},
    \aronly{Remark~\ref{rem:ArnoldDiscussion}}\jonly{\cite[Remark~3.3]{Dz22}}
    and \cite{Mi}.

    \item \label{en:motivation:st-dim-proof}
    Sternfeld's Dimension Theorem~\eqref{en:motivation:th-stern-dim} in the case $n=2$ is implied by Sternfeld's Arrays Theorem~\ref{th:basicSubset} and the following fact: \emph{any subset of $\R^4$ homeomorphic to $[\,0,1\,]^2$ contains Sternfeld arrays of arbitrary large size.}
    See proof of this fact in e.g. \cite[\S5, proof of Theorem~4 for $n=2$]{St89}.
    The analogous fact \cite[Theorem~7]{St85} in $\R^{2n}$ with $n>2$ uses the more general notion \cite[Definition~2.4]{St85} instead of Sternfeld arrays
    (for details see \aronly{Remark~\ref{rem:otherProofs}.\ref{cnt:compareDefs}}\jonly{\cite[Remark~3.1.d]{Dz22}}).
\end{remarkenumi}    
\end{remark}

\begin{figure}[ht]
    \[
    \xymatrixrowsep{3mm}
    \xymatrixcolsep{3mm}
    \xymatrix{
        &&&&&&&& \\
        &   a_2\ar@{.}[rrr]\ar@{.}[dd] &&& a_3\ar@{.}[dddd] &&&& \\
        &&&&& a_7\ar@{.}[dd]\ar@{.}[rr] && a_6\ar@{.}[ddd] & \\
        & a_1 &&&&&&& \\
        && a_9\ar@{.}[dd]\ar@{.}[rrr] &&& a_8 &&& \\
        &&&& a_4\ar@{.}[rrr] &&& a_5 & \\
        && a_{10} &&&&&& \\
        \ar@{->}[uuuuuuu]^(.95){x_2}
        \ar@{->}[rrrrrrrr]_(.95){x_1} &&&&&&&&
    }
    \]
    \caption{A Sternfeld array of size $10$ in the plane $\R^2$}
    \centering
    \label{fig:arrayR2}
\end{figure}
\begin{figure}[ht]
    \[
    \xymatrixrowsep{5mm}
    \xymatrixcolsep{5mm}
    \xymatrix{
        a_1 \ar@{-}[r]^1 & a_2 \ar@{-}[r]^2 & a_3 \ar@{-}[r]^1 & a_4 \ar@{-}[r]^2 & a_5 \ar@{-}[r]^1 & a_6 \ar@{-}[r]^2 & a_7 \ar@{-}[r]^1 & a_8 \ar@{-}[r]^2 & a_9 \ar@{-}[r]^1 & a_{10}
    }
    \]
    \caption{
            Another view on Sternfeld arrays of size $10$ in the plane $\R^2$.
            The sign $\stackrel{k}{\text{--}}$ between points means that $k$th coordinates of these points are equal}
    \label{fig:arrayGridR2}
    \centering
\end{figure}

\begin{remark}[The definition of a Sternfeld array for the particular case of the plane]
\label{rem:definitionPlane}
    A~\emph{Sternfeld array of size $m$ in the plane~$\R^2$} is a sequence $(\fromto{a}{m})$ of pairwise distinct points in $\R^2$ such that the segment~$[\,a_1,a_2\,]$ is parallel to the ordinate axis and for each $i \in \{2, \ldots, m-1\}$
    the segments~$[\,a_{i-1}, a_i\,]$ and~$[\,a_i, a_{i+1}\,]$ form the right angle.

See an~example of a~Sternfeld array in the plane in Figure~\ref{fig:arrayR2}.

\end{remark}

\begin{figure}[ht]
    \includegraphics[height=70mm]{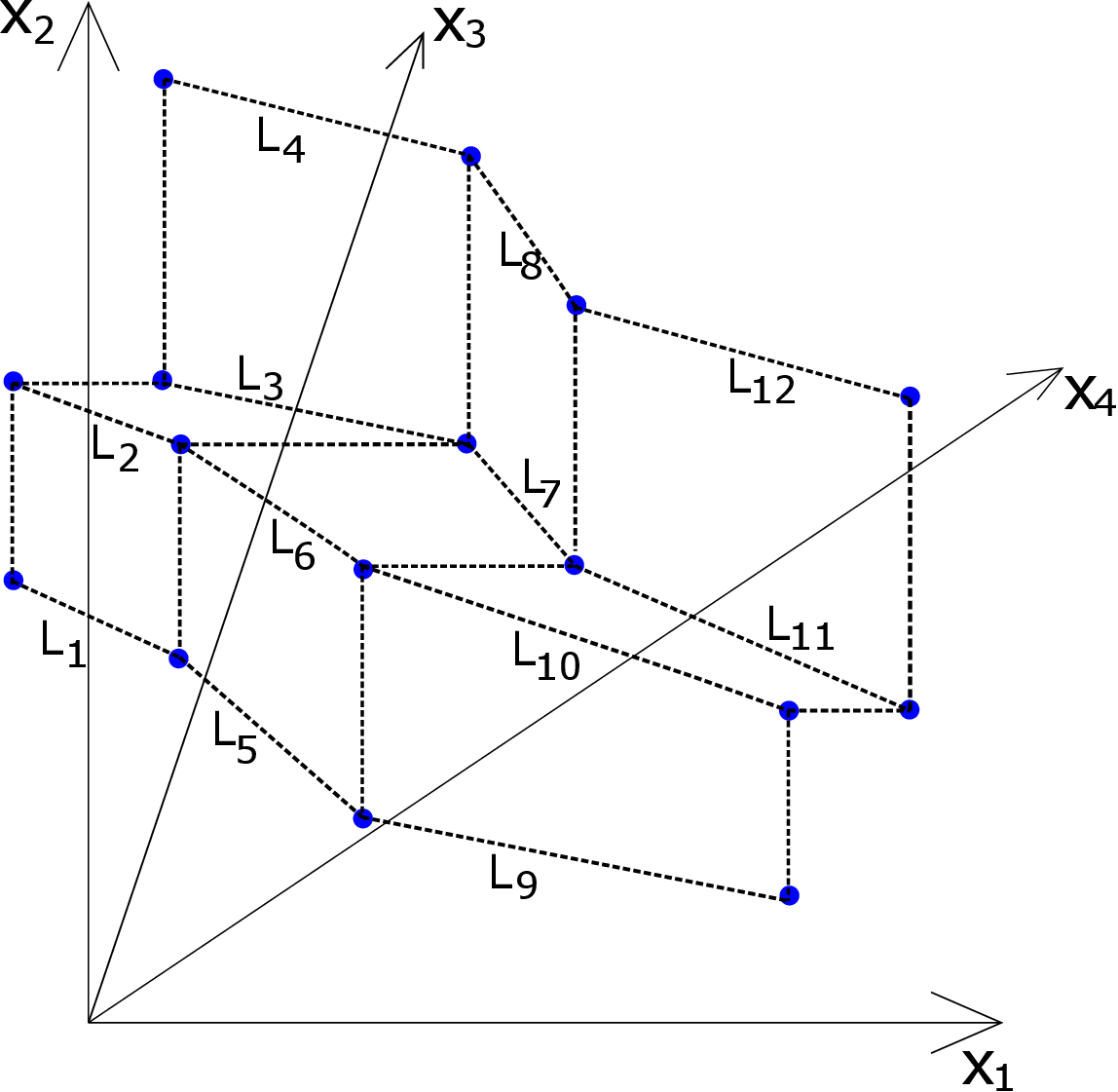}
    \caption{A Sternfeld array of size $4$ in $\R^4$.
            Each vertical line lies in the hyperplane $x_1 = const$.
            Each horizontal line lies in the hyperplane $x_2 = const$.
            Each of the lines $L_1, L_2, L_3, L_4, L_9, L_{10}, L_{11}, L_{12}$ lies in the hyperplane $x_3 = const$.
            Each of the lines $L_5, L_6, L_7, L_8$ lies in the hyperplane $x_4 = const$
            }
    \label{fig:arrayR4}
\end{figure}
\begin{figure}[ht]
    \[
        \xymatrix{
            a_{1,1} \ar@{-}[r]^1\ar@{-}[d]_3 & a_{1,2} \ar@{-}[r]^2\ar@{-}[d]_3 & a_{1,3} \ar@{-}[r]^1\ar@{-}[d]_3 & a_{1,4}\ar@{-}[d]_3 \\
            a_{2,1} \ar@{-}[r]^1\ar@{-}[d]_4 & a_{2,2} \ar@{-}[r]^2\ar@{-}[d]_4 & a_{2,3} \ar@{-}[r]^1\ar@{-}[d]_4 & a_{2,4}\ar@{-}[d]_4 \\
            a_{3,1} \ar@{-}[r]^1\ar@{-}[d]_3 & a_{3,2} \ar@{-}[r]^2\ar@{-}[d]_3 & a_{3,3} \ar@{-}[r]^1\ar@{-}[d]_3 & a_{3,4}\ar@{-}[d]_3 \\
            a_{4,1} \ar@{-}[r]^1             & a_{4,2} \ar@{-}[r]^2             & a_{4,3} \ar@{-}[r]^1             & a_{4,4}
        }
    \]
    \caption{
            Another view on Sternfeld arrays of size $4$ in $\R^4$.
            The sign $\stackrel{k}{\text{--}}$ between points means that $k$th coordinates of these points are equal.
            Here we identify $a_{s,r}$ with $a_{(s,r)}$ for $s,r\in[4]$
            }
    \label{fig:arrayGridR4}
\end{figure}

Recall that $[n] = \tonset{n}$.
For $k\in [n]$ let $y_k$ be the $k$th coordinate of a point $y \in Y^n$ for some $Y$.

Let $m$ be an integer.
Recall that voxels $i,j\in[m]^n$ of an $n$-dimensional array are \emph{adjacent by the coordinate $t \in [n]$} if
$j = (i_1, \ldots, i_{t-1}, i_t \pm 1, i_{t+1}, \ldots, i_n)$.
For a pair $\{i,j\}$ of voxels adjacent by the coordinate $t \in [n]$ denote $\xi(i,j,t)=\xi(t):=2t - \rho_2(i_t)$, where $j_t = i_t + 1$ and $\rho_2\colon\Z\to\{0,1\}$ is defined by $\rho_2(x) = x \bmod 2$.

An $n$-dimensional array $(a_i)_{i\in [m]^n}$, consisting of pairwise distinct points in $\R^{2n}$, is called a {\itshape\bfseries Sternfeld array of size $m$ in $\R^{2n}$} if for each pair $\{i,j\}$ of
voxels adjacent by a~coordinate~$t \in [n]$ we have
$a_{i,\xi(t)} = a_{j,\xi(t)}$.

\begin{remark}[Examples]
\label{rem:arrayExamplesRn}
    The concept of a Sternfeld array is described in \cite[\S5, item~12, the~figure]{St89} (compare to Figure~\ref{fig:arrayGridR4}).

    See an~example of a Sternfeld array for $n=2$ in Figure~\ref{fig:arrayR4}.
    
    A simple example of a Sternfeld array of size $2$ in $\R^{2n}$ is the set of vertices of the $2n$-dimensional unit cube, i.e. $\{0,1\}^{2n} \subset \R^{2n}$.

    Another examples of Sternfeld arrays are rook cycles in \cite[Assertion~3.2]{ADN+}.

    \aronly{An example of an array embedded into some plane $\R^2 \subset \R^4$ is as follows.
    Consider the plane given by the equation $x_1 + x_2 = x_3 + x_4 = 0$.
    Then it contains the array defined by
    $a_{2n+m,2s+r} = ((-1)^m n, (-1)^{m+1}n, (-1)^r s, (-1)^{r+1}s)$ for $n,s=0,1,2,\ldots$ and $m,r \in \{1,2\}$.
    }

    \aronly{For $k,l \in [n]$ let $x^{k,l} = (x_k, x_l)$ be the projection of a point $x \in \R^n$ to the plane defined by $k$th and $l$th coordinates.
    Another example of a Sternfeld array of size $m$ in~$\R^{2n}$ is
    an $n$-dimensional array $(a_i)_{i\in [m]^n}$, consisting of pairwise distinct points in~$\R^{2n}$,
    such that for every $t \in [n]$ and $i_1, \ldots, i_{t-1}, i_{t+1}, \ldots, i_n \in [m]$ the sequence $\Bigl(a_{(\fromto{i}{n})}^{2t-1,2t}\Bigr)_{i_t \in [m]}$ is a Sternfeld array of size $m$ in the plane (Remark~\ref{rem:definitionPlane}).
    However, not all Sternfeld arrays fit this example, because for a Sternfeld array $(a_i)_{i\in [m]^n}$ points $a_i^{2t-1,2t}$ and $a_j^{2t-1,2t}$ may coincide for some voxels $i,j$ and some $t$, so the sequence $\Bigl(a_{(\fromto{i}{n})}^{2t-1,2t}\Bigr)_{i_t \in [m]}$ would not be a Sternfeld array of size $m$ in the plane.}
\end{remark}

\begin{stern}
\label{th:basicSubset}
    If a closed bounded subset $X \subset \R^{2n}$ contains\footnote{An array is a map $[m]^n \to \R^{2n}$, so here `contains' means `contains the images of'.}
    Sternfeld arrays of arbitrary large size then $X$ is not basic.
\end{stern}

Theorem~\ref{th:basicSubset} for $n = 1$ follows from \cite[\S2, Lemma~23, \emph{(iii)}]{St89}.
Theorem~\ref{th:basicSubset} for $n = 2$ is essentially proved in \cite[\S5]{St89}.
Theorem~\ref{th:basicSubset} for $n \geqslant 2$ follows from \cite[\S2, Proof of Theorem~5]{St85} and
\aronly{Remark~\ref{rem:otherProofs}.\ref{cnt:compareDefs}}\jonly{\cite[Remark~3.1.d]{Dz22}}.
See detailed comparison of our proof to Sternfeld's in
\aronly{Remark~\ref{rem:otherProofs}.\ref{cnt:comparisonWithSternfeld}}\jonly{\cite[Remark~3.1.a]{Dz22}},
and of our proof to \cite{MKT} in
\aronly{Remark~\ref{rem:otherProofs}.\ref{cnt:comparisonWithMKT}}\jonly{\cite[Remark~3.1.b]{Dz22}}.

\begin{arno}
\label{prob:Arnold}
    In \cite{Ar} Arnold formulated the following problem: \emph{which subsets of the plane are basic?}
    This problem was solved by Sternfeld
    (for details see \aronly{Remark~\ref{rem:ArnoldDiscussion}}\jonly{\cite[Remark~3.3]{Dz22}}).
    However, the following problem is open: \emph{which subsets of $\R^n$ are basic?}

    The discrete version of the problem is a particular case of finite subsets.
    For details see~\cite{Sk, Re, NR}.
\end{arno}

For \emph{basic embeddability} in $\R^n$ see \cite{St85, Sk, CRS, Ku}.

\section{Proof of Sternfeld's Arrays Theorem \ref{th:basicSubset}}

\begin{lemma}\label{lem:LongSternfeld array}
    Suppose $(c_i)_{i \in [m]^n}$ is an array in $\R^{2n}$ of even size $m$ such that
    
    \begin{subequations}
        \renewcommand{\theequation}{\ref{lem:LongSternfeld array}.\arabic{equation}}
        \begin{align}
            &
            \left.
            \begin{aligned}
                &\text{for each pair $\{i,j\}$ of voxels adjacent by the coordinate $t \in [n]$}
                \\
                & \text{we have}\quad
                c_{i,\xi(t)} + c_{j,\xi(t)} = 0;
            \end{aligned}
            \right.
            \label{eq:adjacentCells}
            \\
            &
            \text{for each voxel}\quad i\in [m]^n\quad\text{we have}\quad
            \sum_{k=1}^{2n}c_{i,k} > \frac{1}{2}.
            \label{eq:halfClose}
        \end{align}
    \end{subequations}

     Then $c_{i,k} > \dfrac{m}{4n}$ for some $i,k$.
\end{lemma}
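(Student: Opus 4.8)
The plan is to evaluate the total sum
$S := \sum_{i \in [m]^n}\sum_{k=1}^{2n}(c_i)_k$
in two ways. Hypothesis \eqref{eq:halfClose} immediately gives $S > m^n/2$. The second evaluation uses \eqref{eq:adjacentCells} to collapse $S$ to a sum of very few terms, after which an averaging argument forces one entry to be large.

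The first, and only delicate, step is a one-dimensional computation. Fix $t \in [n]$ and fix every coordinate of a voxel except the $t$-th; this singles out a line of $m$ voxels, indexed by $i_t = 1,\dots,m$. For the adjacent pair $(i_t, i_t+1)$ one has $\xi = 2t - \rho_2(i_t)$, so this pair constrains coordinate $2t-1$ when $i_t$ is odd and coordinate $2t$ when $i_t$ is even. Since $m$ is even, the pairs with $i_t$ odd, namely $(1,2),(3,4),\dots,(m-1,m)$, partition the line, so \eqref{eq:adjacentCells} telescopes to $\sum_{i_t=1}^m (c_i)_{2t-1} = 0$. The pairs with $i_t$ even, namely $(2,3),(4,5),\dots,(m-2,m-1)$, cover every voxel of the line except its two endpoints, so $\sum_{i_t=1}^m (c_i)_{2t} = (c_{i'})_{2t} + (c_{i''})_{2t}$, where $i',i''$ are the endpoints of the line ($i'_t = 1$, $i''_t = m$).

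Summing the first identity over all $m^{n-1}$ lines in direction $t$ gives $\sum_{i \in [m]^n}(c_i)_{2t-1} = 0$, and summing the second gives $\sum_{i \in [m]^n}(c_i)_{2t} = \sum_{i:\, i_t \in \{1,m\}}(c_i)_{2t}$. Adding over all $t \in [n]$ yields
\[
    S \;=\; \sum_{t=1}^n\ \sum_{i:\, i_t \in \{1,m\}} (c_i)_{2t},
\]
which is a sum of exactly $2n\,m^{n-1}$ terms, each of the form $(c_i)_k$. Comparing with $S > m^n/2$, at least one of these terms satisfies $(c_i)_k > \dfrac{m^n/2}{2n\,m^{n-1}} = \dfrac{m}{4n}$, which is the claim.

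I expect the parity bookkeeping in the definition of $\xi$ to be the only point demanding care: it is exactly the evenness of $m$ that makes the odd-indexed coordinates telescope to zero, while the even-indexed coordinates leave behind precisely the $2n\,m^{n-1}$ endpoint contributions on which the averaging rests. (One may also observe that no voxel has $i_t = 1$ and $i_t = m$ at once, so these endpoint terms occur with coefficient one, but this is not needed for the estimate.)
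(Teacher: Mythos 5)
Your proof is correct and is essentially the paper's own argument: your telescoping along lines in direction $t$ (odd-indexed pairs cancelling coordinate $2t-1$ entirely, even-indexed pairs cancelling coordinate $2t$ except at the two endpoints) is exactly the paper's decomposition of $[m]^n\times[2n]$ into the sets $A$, $B$, $C$, followed by the same averaging over the $2nm^{n-1}$ surviving endpoint terms.
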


\setcounter{mcnt}{0}
\renewcommand{\themcnt}{\text{(\ref{lem:LongSternfeld array}.\alph{mcnt})}}
\begin{proof}
    The lemma follows because
    \[
        \frac{m^n}{2}
        \stackrel{\juststep}{<}
        \sum_{(i,k) \in [m]^n \times [2n]} c_{i,k}
        \stackrel{\juststep}{=}
        \left(\sum_{(i,k) \in A} + \sum_{(i,k) \in B} + \sum_{(i,k) \in C} \right) c_{i,k}
        \stackrel{\juststep}{=}
        \sum_{(i,k) \in C} c_{i,k}.
    \]
\setcounter{mcnt}{0}
    Here
    \begin{itemize}
        \item \cnt follows by \eqref{eq:halfClose};
        
        \item
        \begin{align*}
            A =&
            \bigl\{(\lefteqn{i}\phantom{i + e_t}, 2t-1) : i \in [m]^n,\, t \in [n],\, i_t\; \text{is odd} \bigr\}
            \sqcup \\
            \sqcup&
            \bigl\{(i + e_t, 2t-1) : i \in [m]^n,\, t \in [n],\, i_t\; \text{is odd} \bigr\}, \\
            B =&
            \bigl\{(\lefteqn{i}\phantom{i + e_t}, 2t) : i \in [m]^n,\, t \in [n],\, i_t\; \text{is even},\;i_t < m \bigr\}
            \sqcup \\
            \sqcup&
            \bigl\{(i + e_t, 2t) : i \in [m]^n,\, t \in [n],\, i_t\; \text{is even},\;i_t < m \bigr\}, \\
            C =&
            \bigl\{(i, 2t) : i \in [m]^n,\, t \in [n],\, i_t \in \{1,m\} \bigr\},
        \end{align*}
        where $e_t = (0, \ldots, 0, 1, 0, \ldots, 0) \in [m]^n$ is a voxel (that is the $t$-th vector in the standard orthonormal basis) for any $t \in [n]$;
        
        \item since $m$ is even, $[m]^n \times [2n] = A \sqcup B \sqcup C$, hence \cnt;
        
        \item by \eqref{eq:adjacentCells} and since voxels $i$ and $i + e_t$ are adjacent by the coordinate $t$ we have $\sum\limits_{(i,k)\in A} c_{i,k} = \sum\limits_{(i,k)\in B} c_{i,k} = 0$, which entails \cnt.

    \end{itemize}
    In the last element of the equation there are $2nm^{n-1}$ summands $c_{i,k}$.
    Therefore at least one of the summands exceeds
    $\frac{m^n}{2}\cdot\frac{1}{2nm^{n-1}} = \frac{m}{4n}$.
    \aronly{\footnote{Lemma \ref{lem:LongSternfeld array} is formulated for arrays of even size.
    The analogous lemma is true for arrays of odd size as well, with minor changes in the proof;
    however, this is not required for our proof of Theorem~\ref{th:basicSubset}.}}
\end{proof}

\stabilizemcnt

For a continuous function $f\colon K\to\R$ defined on a closed bounded subset $K \subset \R^d$ the norm $\norm{f}$ is the maximum of the absolute value of $f$ over $K$.
    
Denote $|(\fromto{i}{n})| := i_1 + \ldots + i_n$.

\begin{lemma}
\label{lem:boundedFunctionOnArray}
    Let $X \subset \R^{2n}$ be a closed bounded set containing a Sternfeld array $(a_i)_{i \in [4nm]^n}$ of size $4nm$.
    Then there is a continuous function
    \[
        f_m\colon X\to\R\quad\mbox{such that}\quad\norm{f_m} \leqslant 1 \quad\text{and}\quad
        f_m(a_i)=(-1)^{\abs{i}}\quad\mbox{for any}\; i\in[4nm]^n.
    \]
\end{lemma}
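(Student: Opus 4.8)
The plan is to note that the only property of a Sternfeld array used in this lemma is that its points are \emph{pairwise distinct}; the particular size $4nm$ plays no role here and is chosen only for the later application. Hence $A := \{a_i : i \in [4nm]^n\}$ is a finite --- hence discrete and closed --- subset of $X$, and the ``checkerboard'' rule $a_i \mapsto (-1)^{\abs i}$ is a well-defined continuous function $A \to \{-1, 1\}$. It remains to extend it to all of $X$ without increasing its supremum norm.

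First I would invoke the Tietze extension theorem: $X$ is closed and bounded in $\R^{2n}$, hence compact metric, hence normal, and $A \subset X$ is closed, so the bounded form of the theorem (a continuous map from a closed subset into $[-1,1]$ extends to a continuous map $X \to [-1,1]$) yields $f_m \colon X \to [-1,1]$ with $f_m(a_i) = (-1)^{\abs i}$ for all $i \in [4nm]^n$, and then $\norm{f_m} \leqslant 1$ automatically.

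Alternatively, I would exhibit the extension explicitly. Partition $A = A_+ \sqcup A_-$, where $A_\pm := \{a_i : (-1)^{\abs i} = \pm 1\}$; these are disjoint finite sets, so $\delta := \operatorname{dist}(A_+, A_-) > 0$. With $d_\pm(x) := \operatorname{dist}(x, A_\pm)$ the triangle inequality gives $d_+(x) + d_-(x) \geqslant \delta > 0$ for every $x \in X$, so $f_m(x) := \bigl(d_-(x) - d_+(x)\bigr)\big/\bigl(d_+(x) + d_-(x)\bigr)$ is continuous on $X$, takes values in $[-1,1]$, and equals $+1$ on $A_+$ and $-1$ on $A_-$, as required. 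The only points needing care --- and this is as close as the argument comes to an obstacle --- are that the extension respect $\norm{f_m} \leqslant 1$ (automatic: Tietze lands in a compact interval, and in the explicit formula $\abs{d_-(x) - d_+(x)} \leqslant d_+(x) + d_-(x)$) and that $A_+ \cap A_- = \varnothing$ (immediate from pairwise distinctness of the $a_i$).
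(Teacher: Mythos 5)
Your proof is correct. You rightly identify that the only feature of the array used here is that its points are pairwise distinct, so the checkerboard assignment is a continuous function on the finite closed set $A=A_+\sqcup A_-$, and both of your extensions (Tietze, or the Urysohn-type quotient $\bigl(d_-(x)-d_+(x)\bigr)/\bigl(d_+(x)+d_-(x)\bigr)$) are valid; the verification that $d_+(x)+d_-(x)\geqslant\delta>0$ and that $\abs{d_--d_+}\leqslant d_-+d_+$ covers the only delicate points. The paper instead builds the extension by hand from local bumps: disks of radius $\frac13\min_{i,j}\abs{a_i-a_j}$ about each $a_i$, with $f_m$ equal to $(-1)^{\abs{i}}$ at the center, $0$ on the boundary and outside, and linear in the distance to $a_i$ in between. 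The two constructions are interchangeable for the purposes of Theorem~\ref{th:basicSubset}, since only $\norm{f_m}\leqslant 1$ and the values at the $a_i$ are ever used. The paper's version keeps everything supported near the array and avoids citing any extension theorem; your Urysohn formula is a one-line global expression but requires the small separate check that the denominator is bounded below, while your Tietze route is the most economical but the least self-contained. Any of the three is acceptable.
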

\begin{proof}
    Take $(4nm)^n$ disks with centers at $a_i$ and radii
    $\frac{1}{3} \min\limits_{i,j} \lvert a_i - a_j \rvert$,
    where $\abs{x-y}$ is the Euclidean distance in $\R^{2n}$ for $x,y\in\R^{2n}$.
    Outside these disks set $f_m=0$.
    Inside the disk with the center at $a_i$ take $f_m$ to be $(-1)^{\lvert i\rvert}$ at the center, $0$ on the boundary, and extend $f_m$ to the disk linearly in the distance to $a_i$.
\end{proof}

\setcounter{mcnt}{0}
\renewcommand{\themcnt}{\text{(\ref{th:basicSubset}.\alph{mcnt})}}
\begin{proof}[Proof of Sternfeld's Arrays Theorem \ref{th:basicSubset}]
    
    For each $k \in [2n]$ let $X_k$ be the projection of $X$ to the $k$th axis.
    A continuous function $f \colon X \to \R$
    \emph{has basic decomposition $(\varphi_k\colon X_k\to\R)_{k\in[2n]}$} if $f(x) = \sum\limits_{k=1}^{2n} \varphi_k(x_k)$ for each $x \in X$.
    Note that $\varphi_k$ can be extended to $\R$ without increasing the norm, since $\varphi_k$ is bounded and $X_k$ is closed and bounded, for each $k\in[2n]$.
    Then $X$ is~basic if and only if each continuous function defined on $X$ has some basic decomposition.
    
    \emph{Construction of integers $m_s$ and functions $F_s, \varphi^s_k$ and $F$}.
    For each integer~$m$ take a~Sternfeld array $\bigl(a(m)_i\bigr)_{i \in [4nm]^n}$ of size $4nm$
    and a~function $f_m$ from Lemma~\ref{lem:boundedFunctionOnArray}, applied to~$X$ and $\bigl(a(m)_i\bigr)_{i \in [4nm]^n}$.

    Define integers $m_s$ and continuous functions $F_s\colon X\to\R$ inductively as follows.
    Set $m_0:=1$ and $F_0:=0$.
    Suppose now that $F_s$ and $m_s$ are defined.
    If $F_s$ has no basic decomposition then Sternfeld's Arrays Theorem \ref{th:basicSubset} is proved.
    Otherwise take any basic decomposition $(\varphi^s_k\colon X_k \to \R)_{k \in [2n]}$ of $F_s$.
    Take
    \begin{equation}
    \label{eq:inductiveMs}
    \tag{*}
        m_{s+1}>m_s\left(\max\limits_{k \in [2n]}\norm{\varphi^s_k} + s + 1\right)
    \end{equation}
    and
    \[
        F_{s+1}=F_s+\dfrac{f_{m_{s+1}}}{m_s}.
    \]
    
    Construct in this way an infinite number of $m_s$, $F_s$ and $\varphi_k^s$.
    Define the continuous function $F\colon X\to\R$ by
    \[
        F=\lim\limits_{s\to\infty}F_s=\sum\limits_{s=0}^\infty\frac{f_{m_{s+1}}}{m_s}.
    \]
    The function series converges since $\norm{\frac{f_{m_{s+1}}}{m_s}} \leqslant \frac{1}{m_s} < \frac{1}{s!}$ (which follows since $\frac{m_s}{m_{s-1}} > s$ and $m_0=1$).
    
    \emph{Proof that $F$ has no basic decomposition}.
    Assume to the contrary that there is a basic decomposition $(\varphi_k\colon X_k \to \R)_{k \in [2n]}$ of $F$.
    
    It suffices to prove that $\max\limits_{k\in[2n]}\norm{\varphi_k}>s$ for any $s$.
    This inequality follows from
    \[
        \max_k \norm{\varphi_k} + \max_k \norm{\varphi_k^s} \stackrel{\juststep}{\geqslant}
        \max_k \norm{\varphi_k - \varphi_k^s} \stackrel{\juststep}{>}
        \frac{m_{s+1}}{m_s} \stackrel{\juststep}{>}
        \max_k \norm{\varphi_k^s} + s.
    \]
    Here
    \begin{itemize}[nosep]
        \item (\ref{th:basicSubset}.a) is obvious;
        
        \item (\ref{th:basicSubset}.b) follows by
        applying Lemma \ref{lem:LongSternfeld array} to 
        \begin{equation*}
            m=4nm_{s+1}\quad
            \text{and}\quad
            c_{i,k} = (-1)^{\abs{i}} m_s (\varphi_k - \varphi_k^s)(a_{i,k}),
            \quad\text{where}\quad
            (a_i)=(a(m_{s+1})_i);
        \end{equation*}
        
        \item (\ref{th:basicSubset}.c) follows by \eqref{eq:inductiveMs}.
        
    \end{itemize}
    It remains to check that conditions \eqref{eq:adjacentCells} and \eqref{eq:halfClose} of Lemma \ref{lem:LongSternfeld array} are fulfilled
    for those $m$ and $(c_i)_{i \in [m]^n}$.
    
    Let us check that \eqref{eq:adjacentCells} is fulfilled.
    For voxels $i,j$ adjacent by the coordinate $t$ we have $(-1)^{\lvert i\rvert} + (-1)^{\lvert j\rvert} = 0$
    and
    $(\varphi_{\xi(t)} - \varphi_{\xi(t)}^s)(a_{i,\xi(t)}) = (\varphi_{\xi(t)} - \varphi_{\xi(t)}^s)(a_{j,\xi(t)})$.
    Thus \eqref{eq:adjacentCells} is fulfilled.
    
    Condition \eqref{eq:halfClose} is fulfilled because
    for
    $s \geqslant 2$ we have
    \begin{multline*}
        \abs{\sum\limits_{k=1}^{2n} c_{i,k} - 1}
        \stackrel{\juststep}{=} \\ \stackrel{\themcnt}{=}
        \abs{(-1)^{|i|}\sum\limits_{k=1}^{2n} c_{i,k} - f_{m_{s+1}}(a_i)}
        \stackrel{\juststep}{=}
        m_s\abs{F(a_i) - F_s(a_i) - \frac{f_{m_{s+1}}(a_i)}{m_s}}
        \stackrel{\juststep}{=} \\ \stackrel{\themcnt}{=}
        m_s\abs{F(a_i) - F_{s+1}(a_i)}
        \stackrel{\juststep}{=}
        m_s\abs{\sum_{l=s+1}^\infty \frac{f_{m_{l+1}}(a_i)}{m_l}}
        \stackrel{\juststep}{\leqslant}
        \sum_{l=s+1}^\infty \frac{m_s}{m_l}
        \stackrel{\juststep}{<} \\ \stackrel{\themcnt}{<}
        \sum_{l=s+1}^\infty \prod_{q=s+1}^{l} \frac{1}{q}
        \stackrel{\juststep}{<}
        \sum_{l=s+1}^\infty \frac{1}{3^{l-s}}
        \stackrel{\juststep}{=}
        \frac{1}{2}.
    \end{multline*}
    Here
    \begin{itemize}[nosep]
        \item (\ref{th:basicSubset}.d) and (\ref{th:basicSubset}.h) follow by definition of $f_m$;
        \item (\ref{th:basicSubset}.e) follows by the choice of $c_{i,k}$ and by definition of a basic decomposition;
        \item (\ref{th:basicSubset}.f), (\ref{th:basicSubset}.g) follow by the inductive definition of $F_s$ and definition of $F$, respectively;
        \item (\ref{th:basicSubset}.i) follows since $\frac{m_{l-1}}{m_l} < \frac{1}{l}$, which follows by \eqref{eq:inductiveMs};
        \item (\ref{th:basicSubset}.j) follows since $s + 1 \geqslant 3$;
        \item (\ref{th:basicSubset}.k) is obvious.
    \end{itemize}

\end{proof}

\stabilizemcnt

\aronly{
\section{Open problems and discussion}\label{s:problems}

\begin{remark}[Relation to other proofs]
\label{rem:otherProofs}
\hspace{0pt}
\begin{remarkenumi}
    \item \label{cnt:comparisonWithSternfeld}    
    In his implicit proof of Theorem~\ref{th:basicSubset} Sternfeld used the bounded inverse theorem
    (\cite[\S1, (1.10)]{St85}, \cite[\S2, Theorem~10 and the argument above]{St89})
    and some combinatorial argument (\cite[\S2, Proposition~2.1]{St85}, \cite[\S5, item~12]{St89}).
    Our simpler proof of Theorem~\ref{th:basicSubset} follows the idea of \cite{MKT} and is based on the Weierstrass M-test for convergence of function series and Lemma \ref{lem:LongSternfeld array}, respectively. 
Thus the proofs in \cite{MKT} and in this paper use the direct language of functions instead of the dual language of measures (used by Sternfeld). 
Although these languages are dual (and so analogous), they are not quite parallel. 
In particular, the direct language of functions gives an explicit construction of a non-basic function.  
    Cf.~\cite[\S2, the second paragraph after Theorem~2]{MKT}.
    
    Sternfeld's implicit proof of Theorem~\ref{th:basicSubset} takes up about two and a half pages in \cite[proofs of Proposition~2.1 and Theorem~5]{St85}, or with details four pages in \cite[pp.~36--39]{St89}.
    Our proof is as detailed as \cite{St89} and takes up about two and a half pages.
    
    \item \label{cnt:comparisonWithMKT}
    A proof of Theorem~\ref{th:basicSubset} for $n=1$, which is simpler than Sternfeld's, is given in~\cite{MKT}.
    A simpler exposition of the proof from~\cite{MKT} is given in \cite[p.~9]{Sk}.
    Our proof of Theorem~\ref{th:basicSubset} is a high-dimensional generalization of the proof from~\cite[p.~9]{Sk}.
    More precisely, Lemma~\ref{lem:LongSternfeld array} is a high-dimensional generalization of combinatorial lemmas \cite[\S3, Lemma~2]{MKT}, \cite[p.~9, Lemma]{Sk}.
    The deduction of Theorem~\ref{th:basicSubset} from Lemma~\ref{lem:LongSternfeld array} is a simple generalization of the deductions of Theorem \ref{th:basicSubset} in the case $n=1$ from the combinatorial lemmas in \cite[\S3, proof of Theorem~3]{MKT}, \cite[p.~9, proof of the `only if' part]{Sk}.
    
    \item 
    Our proof slightly simplifies the proof from \cite[p.~9]{Sk}.
    Precisely, in condition \eqref{eq:halfClose} of Lemma \ref{lem:LongSternfeld array} we need
    the values to be $\frac{1}{2}$-far from~$0$;
    in the combinatorial lemma from \cite[p.~9]{Sk} one needs $\frac{2}{m-5}$-closeness to~$\pm 1$, where $m$ is the size of the Sternfeld array.
    Also, in inequality \eqref{eq:inductiveMs} we have $m_s$, while in the analogous inequality from \cite[p.~9]{Sk} one has $m_s!$.
    
    \item \label{cnt:compareDefs}
    In this item we show that the definition of a Sternfeld array is a specific case of \cite[Definition~2.4]{St85} for arrays of large enough size.
        
    Below notions $c$, $n$, $X$, $Y_i$, $T^*$, $\varphi_i$, $\mu$, $\delta$, $L_i$, $E_i$ and $\sigma$ are from \cite[Definition~2.4]{St85}.
    Notions $(a_i)_{i \in [m]^n}$ and $x_k$ are from our paper.
    By $\abs{Z}$ is denoted the cardinality of a set $Z$.
    
    In order to get a specific case of \cite[Definition~2.4]{St85} from our paper, one should
    replace in \cite[Definition~2.4]{St85} the size $m$ of an array by $m^n$, $[m]$ by $[m]^n$, $\{x_j\}_{j=1}^m$ by $(a_i)_{i\in[m]^n}$, $i$ by $k$,
    and should set in \cite[Definition~2.4]{St85} $c = 2n$, $X = \R^{2n}$, $T^* = [2n]$, $Y_k = \R$ and $\varphi_k(x) = x_k$ for $k \in [2n]$. Then
        
    \cite[(ar.1)]{St85} holds since $\mu = \sum_{i \in [m]^n} (-1)^{i_1 + \ldots + i_n} \delta_{a_i}$;
        
    \cite[(ar.2')]{St85} holds since points of a Sternfeld array are pairwise distinct;
        
    \cite[(ar.3)]{St85} holds since
        for odd $m$
        \begin{align*}
            L_{2t-1} &= [m] ^ {t - 1} \times \{1, \ldots, m-1\} \times [m]^{n - t}, \\
            L_{2t} &= [m] ^ {t - 1} \times \{2, \ldots, m\} \times [m]^{n - t},
        \end{align*}
        and for even $m$
        \begin{align*}
            L_{2t-1} &= [m]^n, \\
            L_{2t} &= [m] ^ {t - 1} \times \{2, \ldots, m-1\} \times [m]^{n - t};
        \end{align*}
        
    \cite[(ar.3.1')]{St85} holds since the decompositions $E_k$ of $L_k$ may be easily gotten from Figure~\ref{fig:arrayGridR4} and the form of the sets $A$, $B$, $C$ in equality~(\ref{lem:LongSternfeld array}.b);
        
    \cite[(ar.3.2)]{St85} holds since
    \begin{align*}
        &\sigma(j) = \bigl\{2t-1 : t \in [n],\; j_t \neq m \bigr\}
        \cup \bigl\{2t : t \in [n],\; j_t \neq 1 \bigr\}
        \quad&\text{for $j \in [m]^n$ and $m$ odd,} \\
        &\sigma(j) = \bigl\{2t-1 : t \in [n]\bigr\}
        \cup \bigl\{2t : t \in [n],\; j_t \neq 1,m \bigr\}
        \quad&\text{for $j \in [m]^n$ and $m$ even,} \\
        &\abs{\bigl\{ j \in [m]^n : \abs{\sigma(j)} = 2n \bigr\}} = (m-2)^n \geqslant m^n - c m^{n-1}
        \quad&\text{as $m \geqslant 3^n$.}
    \end{align*}
    
    \item 
    The definition of a Sternfeld array in the plane (Remark~\ref{rem:definitionPlane} or the definition of a Sternfeld array in $\R^{2n}$ in the case $n=1$) is a particular case of \cite[\S2, Definition~2]{MKT}.
    The difference is that
    \begin{itemize}
        \item in our paper a Sternfeld array is a finite sequence, while in \cite[\S2, Definition~2]{MKT} an array can be a countable sequence;
        \item in our paper the segment $[\,a_1,a_2\,]$ is parallel to the ordinate axis, while in \cite[\S2, Definition~2]{MKT} this segment is parallel just to one of the abscissa or ordinate axes;
        \item in our paper all points $a_i$ are pairwise distinct, while in \cite[\S2, Definition~2]{MKT} only each two consecutive points $a_i$ and $a_{i+1}$ are distinct.
    \end{itemize}

\end{remarkenumi}    
\end{remark}

\begin{remark}[Generalizations]
\label{rem:generalSternfeld}
    Lebesgue covering dimension \cite{HW} $\dim X$ of a topological space $X$ is the minimal~$n$ such that for any open cover of $X$ there is an open refinement of this cover such that each point of $X$ belongs to at most $n+1$ sets of the refinement.
    
    Ostrand proved the result, analogous to Kolmogorov's Theorem (Remark~\ref{rem:motivation}.\ref{en:motivation:th-kolm-super}), for any compact metric space of Lebesgue dimension $n > 1$ (see the survey \cite{St89}).
    
    Sternfeld's Dimension Theorem (Remark~\ref{rem:motivation}.\ref{en:motivation:th-stern-dim}) has the analogous generalization \cite{St85, St89}.
    
\end{remark}

\begin{remark}[Discussion of the solution of Arnold's Problem~\ref{prob:Arnold}]
\label{rem:ArnoldDiscussion}
    
    Denote by $\abs{Z}$ the~number of elements in a~finite set~$Z$.
    For each $k \in [n]$ define the~projection $\pi_k\colon\R^n\to\R$ by $\pi_k(x) := x_k$.
    For a~subset~$Y \subset \R^n$
    denote
    \[
        E(Y) := \left\{
        x \in Y :
        \abs{\pi_k^{-1}(x_k) \cap Y} \geqslant 2 \;\text{for all}\; k \in [n]
        \right\}.
    \]

    For a closed bounded subset $X \subset \R^n$ consider the following properties (cf. \cite[\S1, Theorem~2, (B), (E), (A)]{MKT} for the plane):
    \begin{align*}
        (\text{bas}_n) \qquad&
        X \quad\text{is basic}. \\
        (\text{E}_n) \qquad&
        E^l(X) = E\Bigl(E\bigl(\ldots E(X)\ldots\bigr)\Bigr) = \varnothing \quad\text{for some}\quad l\in\N. \\
        (\text{arr}_{2n}) \qquad&
        X \subset \R^{2n} \quad \text{does not contain Sternfeld arrays of arbitrary large size}.
    \end{align*}

    In \cite[\S 2, Lemma~23, \emph{(ii)}]{St89}
    Sternfeld proved that $(\text{E}_n) \implies (\text{bas}_n)$.
    Theorem~\ref{th:basicSubset} means that $(\text{bas}_{2n}) \implies (\text{arr}_{2n})$.

    Sternfeld solved Arnold's Problem~\ref{prob:Arnold} by proving that $(\text{E}_2) \Longleftrightarrow (\text{bas}_2)$
    \cite[\S 2, Lemma~23]{St89}.
    Let us denote by $(\text{arr}_2')$ the property $(\text{arr}_2)$ with a following change in the definition of a Sternfeld array in the plane (Remark~\ref{rem:definitionPlane}): `pairwise distinct points in $\R^2$ such that' should be replaced by `points in $\R^2$ such that for all $i\in\{1, \ldots, m-1\}$ the points $a_i$ and $a_{i+1}$ are distinct,'.
    The equivalence $(\text{E}_2) \Longleftrightarrow (\text{arr}_2')$ is obvious (see e.g. \cite[p.~8, item~10]{Sk}).

    The inverse implication $(\text{bas}_n) \implies (\text{E}_n)$ is false \cite[Example~4.8]{NR}.
    The inverse implication $(\text{arr}_{2n}) \implies (\text{bas}_{2n})$ is probably also false.

    The implication $(\text{E}_{2n}) \implies (\text{arr}_{2n})$ is a simple high-dimensional generalization of \cite[p.~8, item 10]{Sk}.
    For the proof one may need the following change in the definition of a~Sternfeld array:
    at the~beginning `An $n$-dimensional array $(a_i)_{i\in [m]^n}$' should be replaced by `Let $I \subset \Z$ be a finite set of indicies, of cardinality $m$. An $n$-dimensional array $(a_i)_{i\in I^n}$'.
    
    The definition of a Sternfeld array can be easily given for odd dimensions as well;
    however, this is not required for Sternfeld's Dimension Theorem (Remark~\ref{rem:motivation}.\ref{en:motivation:th-stern-dim}).

\end{remark}

\begin{problem}
    It is interesting to find a direct proof of the statement
    \emph{if~a~closed bounded subset $X \subset \R^{2n}$ contains arrays (as defined in \cite[Definition~2.4]{St85}) of arbitrary large size then $X$ is not basic.}
\end{problem}

}

\end{document}